\documentclass{article}
\usepackage[utf8]{inputenc}
\usepackage{amsmath,amsthm,dsfont,pifont,amsfonts,MnSymbol}
\usepackage{verbatim}

\title{Every $T_1$ connected first-countable space is a continuous open image of a connected metrizable space  
\footnote{2010 Mathematics Subject Classification: 54C10, 54E40, 54D05}
\footnote{Keywords\textup{:} open map, connected space, metrizable space, first-countable space}}
\author{Vlad Smolin \\ \small{Krasovskii Institute of Mathematics and Mechanics, Ural Federal University,} \\ \small{ Yekaterinburg, Russia} \\ \small{e-mail: SVRusl@yandex.ru}}

\theoremstyle{plain}
\newtheorem{teo}{Theorem}
\newtheorem{lemm}[teo]{Lemma}
\newtheorem{corr}[teo]{Corollary}

\newtheorem{ques}[teo]{Question}

\theoremstyle{definition}
\newtheorem{deff}[teo]{Definition}

\newtheorem{nota}[teo]{Notation}

\renewcommand{\phi}{\varphi}

\begin{document}

\maketitle
\begin{abstract}
Answering a question posed by Vladimir Tkachuk, we prove that every $T_1$ connected first-countable space is a continuous open image of a connected metrizable space.
\end{abstract}

\section{Introduction}

In \cite{Tkach} author posed three questions:

\begin{itemize}
    \item[\ding{43}\,] Let $X$ be a Tychonoff sequential connected space. Is it true that $X$ is a continuous quotient image of a metrizable connected space?
    \item[\ding{43}\,] Let $X$ be a Tychonoff Fr\'echet connected space. Is it true that $X$ is a continuous hereditarily quotient image of a metrizable connected space?
    \item[\ding{43}\,] Let $X$ be a Tychonoff first-countable connected space. Is it true that $X$ is a continuous open image of a metrizable connected space?
\end{itemize}

These problems were motivated by the following classical results in general topology(for details, see \cite[4.2.D]{eng}):
\begin{itemize}
    \item[\ding{164}\,] Every Hausdorff sequential space is a continuous quotient image of a metrizable space.
    \item[\ding{164}\,] Every Hausdorff Fr\'echet space is a continuous hereditarily quotient image of a metrizable space.
    \item[\ding{164}\,] Every first-countable space is a continuous open image of a metrizable space.
\end{itemize}

It is worth noting that in each of these results, the constructed metrizable space is always disconnected, so it cannot be used as a solution. In \cite{Fed} authors gave a positive answer to the first question, and a positive answer to the second question followed.
However, the third question remained unanswered.
In this paper we give a positive answer to the third question. The main idea of our proof is the following: if we are given an open continuous map $g$ from a specific metrizable $M$ space onto a $T_1$ connected first-countable space $Z$, then we can find a connected metrizable topology $\gamma \subseteq \tau_M$ such that $g\colon \langle M, \gamma\rangle \to Z$ is continuous.

\section{The proof}
We use terminology from \cite{enc}.
\begin{nota} .
    \begin{itemize}
        \item [\ding{46}\,] If $g$ and $f$ are functions, then ${g}\circ {f}$ is the composition of ${g}$ and ${f}$ (that is, ${g}$ after ${f}$);
        \item [\ding{46}\,] if $f$ is a function and $A$ is a subset of its domain, then $f[A] \coloneq \{f(a)\ \colon\ a \in A\}$;
        \item[\ding{46}\,] if $A$ is a set, then $\mathsf{D}(A)$ is the discrete topological space with the underlying set $A$.
    \end{itemize}
\end{nota}
The following is a key tool for our construction.

\begin{deff}\cite{Roit}
    Suppose that $X$ and $Y$ are topological spaces and let $\phi\colon Y \to X$ be a function (not necessarily continuous). Then
    $$
        \sigma(\tau_X, \tau_Y, \phi) \coloneq \{U \in \tau_X\ \colon\ \phi^{-1}[U] \in \tau_Y\}.
    $$
In \cite{Roit} authors prove that $\sigma(\tau_X, \tau_Y, \phi)$ is a topology on the set $X$.
\end{deff}

\begin{lemm} \label{main_lemma}
    Let $X$ and $Y$ be topological spaces, $D_X$ and $D_Y$ closed discrete subsets of $X$ and $Y$ respectively, and $\phi\colon Y \to X$ an injection such that $\phi[Y\setminus D_Y] = D_X$ (or equivalently $\phi[D_Y] = \phi[Y] \setminus D_X$) and $\phi[Y]$ is dense in $X$. Then
    \begin{itemize}
        \item[1.] If $D$ is a closed discrete subset of $X$ and $\phi^{-1}[D]$ is $F_\sigma$ in $Y$, then $D$ is $F_\sigma$ in $\langle X, \sigma(\tau_X, \tau_Y, \phi) \rangle$.
        \item[2.] If $D$ is a closed discrete subset of $X$ and $\phi^{-1}[D] = \bigcup_{\alpha \in A} F_\alpha$, where $\{F_\alpha\ \colon\ \alpha \in A\}$ is a discrete family of closed sets in $Y$, then $\{\phi[F_\alpha]\ \colon\ \alpha \in A\}$ is a discrete family of closed sets in $\langle X, \sigma(\tau_X, \tau_Y, \phi) \rangle$.
        \item[3.] If $D$ is a closed discrete subset of $X$ and $\phi^{-1}[D] = \bigcup_{\alpha \in A} F_\alpha$, where $\{F_\alpha\ \colon\ \alpha \in A\}$ is a $\sigma$-discrete family of closed sets in $Y$, then $\{\phi[F_\alpha]\ \colon\ \alpha \in A\}$ is a $\sigma$-discrete family of closed sets in $\langle X, \sigma(\tau_X, \tau_Y, \phi) \rangle$.
        \item[4.] If $D$ is a closed discrete subset of $X$ and $\phi^{-1}[D]$ is a closed discrete subset of $Y$, then $D$ is a closed discrete subset of $\langle X, \sigma(\tau_X, \tau_Y, \phi) \rangle$.
        \item[5.] If every open subset of $X$ is $F_\sigma$ and every open subset of $Y$ is $F_\sigma$, then every open subset of $\langle X, \sigma(\tau_X, \tau_Y, \phi) \rangle$ is $F_\sigma$.
        \item[6.] If $X$ and $Y$ are collectionwise normal, then $\langle X, \sigma(\tau_X, \tau_Y, \phi) \rangle$ is collectionwise normal.
        \item[7.] If $X$ and $Y$ are metrizable, then $\langle X, \sigma(\tau_X, \tau_Y, \phi) \rangle$ is submetrizable.
    \end{itemize}
\end{lemm}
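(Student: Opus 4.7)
My plan begins with the dual reformulation: $C \subseteq X$ is $\sigma$-closed if and only if $C$ is $\tau_X$-closed and $\phi^{-1}[C]$ is $\tau_Y$-closed, which follows from the definition of $\sigma$ by taking complements. This makes $\phi \colon \langle Y, \tau_Y\rangle \to \langle X, \sigma\rangle$ continuous, so any $\sigma$-discrete family of $\sigma$-closed sets is $\tau_X$-discrete (as $\sigma \subseteq \tau_X$) and its $\phi$-preimage is a $\tau_Y$-discrete family of $\tau_Y$-closed sets. Throughout I will exploit the decomposition of $X$ into $D_X$ and $X \setminus D_X$ and of $Y$ into $D_Y$ and $Y \setminus D_Y$, with $\phi$ restricting to a bijection $Y \setminus D_Y \to D_X$ and sending $D_Y$ injectively into $X \setminus D_X$.

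Parts 1--4 reduce to the workhorse observation that if $D$ is closed discrete in $X$ and $F \subseteq \phi^{-1}[D]$ is $\tau_Y$-closed, then $\phi[F]$ is $\sigma$-closed: $\phi[F] \subseteq D$ is $\tau_X$-closed, and $\phi^{-1}[\phi[F]] = F$ is $\tau_Y$-closed by injectivity. Part 4 follows because under its hypotheses every subset of $D$ is $\sigma$-closed, so for $d \in D$ the complement $X \setminus (D \setminus \{d\})$ is a $\sigma$-open neighborhood of $d$ meeting $D$ only at $d$. Part 2 uses that $\bigcup_{\alpha \neq \alpha_0} \phi[F_\alpha]$ is $\sigma$-closed since its preimage $\bigcup_{\alpha \neq \alpha_0} F_\alpha$ is closed in $Y$ as the union of a discrete family of closed sets; its complement gives the required $\sigma$-open neighborhood. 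Part 3 reduces to part 2 by partitioning the index set into countably many discrete subfamilies, and part 1 decomposes $D = (D \setminus \phi[Y]) \cup \bigcup_n \phi[F_n]$, where $D \setminus \phi[Y]$ is $\sigma$-closed (its preimage is empty) and each $\phi[F_n]$ is $\sigma$-closed by the observation.

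Part 5 is the genuine new step and hinges on the decomposition $U = (U \setminus D_X) \cup (U \cap D_X)$ for $U \in \sigma$. The summand $U \setminus D_X$ is $\tau_X$-open, hence $F_\sigma$ in $\tau_X$, and each $\tau_X$-closed piece $C_n \subseteq X \setminus D_X$ has $\phi^{-1}[C_n] \subseteq D_Y$ automatically $\tau_Y$-closed, so $C_n$ is $\sigma$-closed. The summand $U \cap D_X$ satisfies $\phi^{-1}[U \cap D_X] = \phi^{-1}[U] \cap (Y \setminus D_Y)$, which is $\tau_Y$-open and therefore $F_\sigma$ with $\tau_Y$-closed pieces $F_m \subseteq Y \setminus D_Y$; each $\phi[F_m]$ lies in $D_X$, so is $\tau_X$-closed with preimage $F_m$, making $\phi[F_m]$ $\sigma$-closed, and these images reconstruct $U \cap D_X$.

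I expect parts 6 and 7 to be the main obstacles. For part 6, given a $\sigma$-discrete family $\{D_\alpha\}$ of $\sigma$-closed sets, collectionwise normality of $X$ and $Y$ supplies disjoint $\tau_X$-open $V_\alpha \supseteq D_\alpha$ and disjoint $\tau_Y$-open $W_\alpha \supseteq \phi^{-1}[D_\alpha]$; my plan is to combine these into $\sigma$-open separators $U_\alpha$, along the lines of $U_\alpha = V_\alpha \setminus \phi[Y \setminus W_\alpha]$ after normality-based shrinking, with the delicate point being to ensure $U_\alpha$ is simultaneously $\tau_X$-open and has $\tau_Y$-open $\phi$-preimage at points of $D_X$ and $\phi[D_Y]$. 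For part 7, my plan is to produce a coarser metrizable topology on $\langle X, \sigma\rangle$ by constructing a continuous injection from $\sigma$ into a metrizable space built from $X$, $Y$, and $\phi$ (for instance, a subspace of $X \times Y$ or a suitably topologized disjoint union of $Y$ and $X \setminus \phi[Y]$), using the closed-discrete structure of $D_X$ and $D_Y$ to interpolate between $d_X$ and $d_Y$; arranging the map so that preimages land in $\sigma$ rather than merely in $\tau_X$ is the subtle step.
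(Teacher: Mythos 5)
Your treatment of parts 1 through 5 is correct and essentially reproduces the paper's own argument: the same characterization of $\sigma$-closed sets, the same workhorse observation that $\phi[F]$ is $\sigma$-closed whenever $D$ is closed discrete in $X$ and $F\subseteq\phi^{-1}[D]$ is $\tau_Y$-closed, the same splitting off of $D\setminus\phi[Y]$ in part 1, and the same decomposition $U=(U\setminus D_X)\cup(U\cap D_X)$ in part 5 (the paper routes the second summand through part 1; you unwind that appeal, which is fine, and your part 4 is in fact a slightly cleaner version of the paper's).

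Parts 6 and 7, however, are not proved: you offer plans and yourself flag the decisive step in each as ``delicate''/``subtle'', and in both cases the sketched route has a concrete defect. For part 6, the candidate separator $U_\alpha=V_\alpha\setminus\phi[Y\setminus W_\alpha]$ is in general not $\tau_X$-open: $\phi$ is not a closed map, and since $\phi[Y]$ is dense in $X$ the set $\phi[Y\setminus W_\alpha]$ can itself be dense, so subtracting it destroys openness. The working argument (the paper defers to the proof of Theorem 2.2 of Fleissner--Porter--Roitman) must treat the traces of the given family on $D_X$ and on $X\setminus D_X$ by different mechanisms, exploiting that $D_X$ and $D_Y$ are closed discrete. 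For part 7 the gap is larger: a graph-type embedding $x\mapsto\langle x,\phi^{-1}(x)\rangle$ into $X\times Y$ is only defined on $\phi[Y]$, and a disjoint-union model gives no reason for preimages of its open sets to land in $\sigma$. The paper's construction is quite specific: fix metrics $\rho_X$ (chosen with $\rho_X(d_1,d_2)\geq 1$ for distinct $d_1,d_2\in D_X$) and $\rho_Y$, set $\lambda_0(x,y)=\min\{\rho_X(x,y),\rho_Y(\phi^{-1}(x),\phi^{-1}(y))\}$ when both points lie in $\phi[Y]$ and $\lambda_0=\rho_X$ otherwise, and pass to the chain metric $\lambda(x,y)=\inf\{\lambda_0(x_0,x_1)+\dots+\lambda_0(x_{n-1},x_n)\}$; one then checks that $\lambda$ is a metric and that every $\lambda$-ball lies in $\sigma$, because a small $\rho_Y$-ball around any $y\in\phi^{-1}[O^{\lambda}_{\varepsilon}(x)]$ can be appended to a near-optimal chain. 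Without this (or an equivalent) construction, part 7 is missing its proof.
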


\begin{proof}
Note that $F$ is a closed subset of $\langle X, \sigma(\tau_X, \tau_Y, \phi) \rangle$ if and only if $F$ is closed in $X$ and $\phi^{-1}[F]$ is closed in $Y$.

    (1) Let $D_2 \coloneq \bigcup \{F \subseteq D\ \colon\ \phi^{-1}[F] = \varnothing\}$. Since $D$ is a closed discrete subset of $X$, we see that $D_2$ is closed in $X$, and so $D_2$ is a closed subset of $\langle X, \sigma(\tau_X, \tau_Y, \phi) \rangle$, since $\phi^{-1}[D_2] = \varnothing$ is closed in $Y$.
    
    Let $\phi^{-1}[D] = \bigcup_{i \in \omega} F_i$, where $F_i$ is closed in $Y$ for all $i \in \omega$. Since $D$ is a closed discrete set, then $\phi[F_i]$ is closed in $X$ for all $i \in \omega$, also $\phi^{-1}[\phi[F_i]] = F_i$ for all $i \in \omega$, and so we see that $\phi[F_i]$ is closed in $\langle X, \sigma(\tau_X, \tau_Y, \phi) \rangle$ for all $i \in \omega$. And so $D = D_2 \cup \bigcup_{i \in \omega} \phi[F_i]$ is $F_\sigma$ in $\langle X, \sigma(\tau_X, \tau_Y, \phi) \rangle$.

    (2) Let $A^\prime \subseteq A$, we show that $\bigcup_{\alpha \in A^\prime}\phi[F_\alpha]$ is closed in $\langle X, \sigma(\tau_X, \tau_Y, \phi) \rangle$. Since $D$ is a closed discrete set, we see that $\bigcup_{\alpha \in A^\prime}\phi[F_\alpha]$ is closed in $X$, also $\phi^{-1}[\bigcup_{\alpha \in A^\prime}\phi[F_\alpha]] = \bigcup_{\alpha \in A^\prime}F_\alpha$ is closed in $Y$, and so $\bigcup_{\alpha \in A^\prime}\phi[F_\alpha]$ is closed in $\langle X, \sigma(\tau_X, \tau_Y, \phi) \rangle$.

    (3) Let $\{F_\alpha\ \colon\ \alpha \in A\} = \bigcup_{i \in \omega} \mathcal{F}_i$, where $\mathcal{F}_i$ is a discrete family of closed sets in $Y$ for all $i \in \omega$. Then apply (2) to $\mathcal{F}_i$ for every $i \in \omega$.

    (4) Let $D_2 \coloneq \bigcup \{F \subseteq D\ \colon\ \phi^{-1}[F] = \varnothing\}$ and $D_1 \coloneq D \setminus D_2$. Every subset $F$ of $D_2$ is closed in $X$ and $\phi^{-1}[F] = \varnothing$ is closed in $Y$, therefore $F$ is closed in $\langle X, \sigma(\tau_X, \tau_Y, \phi) \rangle$. So we see that $D_2$ is a closed discrete subset of $\langle X, \sigma(\tau_X, \tau_Y, \phi) \rangle$. Every subset $F$ of $D_1$ is closed in $X$ and $\phi^{-1}[F]$ is closed in $Y$, and so it is closed in $\langle X, \sigma(\tau_X, \tau_Y, \phi) \rangle$. Now we see that $D$ is a closed discrete subset of $\langle X, \sigma(\tau_X, \tau_Y, \phi) \rangle$.

    (5) Let $U \in \sigma(\tau_X, \tau_Y, \phi)$, then $U \in \tau_X$ and $\phi^{-1}[U] \in \tau_Y$. Note that $U = (D_X \cap U) \cup (U \setminus D_X)$. Then $\phi^{-1}[U] = \phi^{-1}[D_X \cap U] \cup \phi^{-1}[U \setminus D_X]$.

    Let $U \setminus D_X = \bigcup_{i \in \omega} F_i$, where $F_i$ is closed in $X$ for all $i \in \omega$. Since $\phi^{-1}[U \setminus D_X] \subseteq D_Y$, we see that $\phi^{-1}[F_i]$ is closed in $Y$ for all $i \in \omega$, and so $U \setminus D_X$ is $F_\sigma$ in $\langle X, \sigma(\tau_X, \tau_Y, \phi) \rangle$.

    Since $\phi^{-1}[U] \in \tau_Y$ and $\phi^{-1}[U \setminus D_X] \subseteq D_Y$, then $$\phi^{-1}[U \cap D_X] = \phi^{-1}[U \setminus (U \setminus D_X)] = \phi^{-1}[U] \setminus \phi^{-1}[U \setminus D_X] \in \tau_Y.$$ Then from (1) it follows that $\phi^{-1}[U \cap D_X]$ is $F_\sigma$ in $\langle X, \sigma(\tau_X, \tau_Y, \phi) \rangle$.

    (6) The proof is the same as the proof of Theorem 2.2 in \cite{Roit}. Replace $C$ with $D_X$, $\mathsf{J}(\kappa)$ with $Y$, $T$ with $D_Y$ and $\phi$ with $\phi$.

    (7) Let $\rho_X$ be a metric on $X$ such that $\rho_X$ generates $\tau_X$ and $\rho_X(d_1, d_2) \geq 1$ for all distinct $d_1, d_2 \in D_X$. It follows from \cite[Lemma 2.4]{Roit} that such metric exists. Let $\rho_Y$ be a metric on $Y$ such that $\rho_Y$ generates $\tau_Y$. Now we use the construction from \cite{Roit}. For $x, y \in X$ let
        \[
            \lambda_0(x, y)\coloneq\begin{cases}
            \mathsf{min}\{\rho_X(x, y), \rho_Y(\phi^{-1}(x), \phi^{-1}(y))\},&\text{if $\{x, y\}\subseteq\phi[Y]$;}\\
            \rho_X(x, y),&\text{otherwise}.
            \end{cases}
        \]
    Now for $x, y \in X$ let 
    $$
        \lambda(x, y) \coloneq \mathsf{inf}\{\lambda_0(x_0, x_1) + \dots + \lambda_0(x_{n-1}, x_n)\},
    $$
    where $x_0, \dots, x_n$ varies over all finite sequences in $X$ with $x_0=x$ and $x_n=x$. Applying the same reasoning as in the proof of Theorem 2.5 in \cite{Roit}, we can show that $\lambda$ is a metric. It is easy to see that $\lambda(x, y) \leq \rho_X(x,y)$ for all $x, y \in X$, and so $\tau_\lambda \subseteq \tau_X$ ($\tau_\lambda$ is the topology on $X$ generated by $\lambda$). Now we prove that $\tau_\lambda \subseteq \sigma(\tau_X, \tau_Y, \phi)$.

    Fix $x \in X$ and $\varepsilon > 0$, we need to prove that $\phi^{-1}[O^{\lambda}_\varepsilon(x)] \in \tau_Y$. Take an arbitrary $y \in \phi^{-1}[O^{\lambda}_\varepsilon(x)]$. Now let $x_1, \dots, x_n$ be such that $\lambda_0(x, x_1) + \lambda_0(x_1, x_2) + \dots + \lambda_0(x_{n-1}, x_n) + \lambda_0(x_n, \phi(y)) < \varepsilon$, then there exists $\delta > 0$ such that for all $z \in O^{\rho_Y}_{\delta}(y)$ we have $\lambda_0(x, x_1) + \lambda_0(x_1, x_2) + \dots + \lambda_0(x_{n-1}, x_n) + \lambda_0(x_n, \phi(y)) + \rho_Y(y, z) < \varepsilon$, and so $\lambda(x, \phi(z)) < \varepsilon$ for all $z \in O^{\rho_Y}_{\delta}(y)$, but it means that $O^{\rho_Y}_{\delta}(y) \subseteq \phi^{-1}[O^{\lambda}_\varepsilon(x)]$.
\end{proof}

\begin{teo} \label{main_theorem}
    Every $T_1$ connected first-countable space is a continuous open image of a connected metrizable space.
\end{teo}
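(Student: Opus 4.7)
The plan is to apply the classical theorem (Engelking 4.2.D, the third bulleted result in the introduction) to obtain a metrizable space $M$ and a continuous open surjection $g\colon M\to Z$, and then to coarsen $\tau_M$ to a connected metrizable topology $\gamma\subseteq \tau_M$ by means of Lemma~\ref{main_lemma}, in such a way that $g\colon\langle M,\gamma\rangle\to Z$ is still continuous. Openness of $g$ is then automatic, since $\gamma\subseteq \tau_M$ and $g$ was already open from $\tau_M$.

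I would first fix a particularly convenient $M$. Picking a decreasing countable local base $\{U^z_n\}_{n\in\omega}$ at each $z\in Z$, the standard construction realises $M$ as a disjoint sum $\bigoplus_{z\in Z} M_z$ of convergent sequences $M_z=\{c_z\}\cup\{z^{\ast}_n:n\in\omega\}$ with $z^{\ast}_n\to c_z$, and defines $g(c_z)=z$ and $g(z^{\ast}_n)\in U^z_n$. In this $M$ the set $D_X\coloneq\{c_z:z\in Z\}$ is closed and discrete, and $M\setminus D_X$ is an open, discrete, dense subset, which is exactly the configuration required by Lemma~\ref{main_lemma}.

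The core task is then to construct a connected metrizable space $Y$, a closed discrete subset $D_Y\subseteq Y$, and an injection $\phi\colon Y\to M$ satisfying $\phi[Y\setminus D_Y]=D_X$, $\phi[Y]$ dense in $M$, and, crucially, such that $g\circ\phi\colon Y\to Z$ is continuous. A natural attempt is a hedgehog- or tree-like metrizable space built out of $Z$: attach to each $z\in Z$ an arc whose tip is sent by $\phi$ to $c_z$ and whose approximating points lie in $D_Y$ and map bijectively onto $\{z^{\ast}_n:n\in\omega\}$, joining all arcs through a common base structure to force connectedness. The metric on $Y$ must simultaneously control two opposing things: the images of the approximating points must converge in $Z$ to $g(c_z)=z$ (so that $g\circ\phi$ is continuous), while the preimage $\phi^{-1}[D_X]$ must remain closed and discrete in $Y$. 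Balancing these is the main technical obstacle, and it is precisely where the first-countability of $Z$ enters.

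With $(Y,\phi)$ in hand, set $\gamma\coloneq\sigma(\tau_M,\tau_Y,\phi)$. By Lemma~\ref{main_lemma}(7) it is submetrizable; parts (2)--(6) then give that $\langle M,\gamma\rangle$ is perfectly normal, collectionwise normal, and that appropriate $\sigma$-discrete families of closed sets in $M$ descend to $\sigma$-discrete families in $\gamma$. Transporting a $\sigma$-discrete base of $M$ through parts (2)--(3) should yield a $\sigma$-discrete base for $\gamma$, and Bing's metrization theorem then upgrades submetrizability to full metrizability. Connectedness is automatic: $\phi\colon Y\to\langle M,\gamma\rangle$ is continuous by the very definition of $\sigma$, so $\phi[Y]$ is a connected dense subspace of $\langle M,\gamma\rangle$. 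Finally, continuity of $g\colon\langle M,\gamma\rangle\to Z$ reduces to checking $\phi^{-1}[g^{-1}[V]]\in\tau_Y$ for every open $V\subseteq Z$, which is exactly the continuity of $g\circ\phi$ already built into the construction of $(Y,\phi)$.
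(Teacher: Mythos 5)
Your high-level plan coincides with the paper's (coarsen $\tau_M$ to $\sigma(\tau_M,\tau_Y,\phi)$ via Lemma~\ref{main_lemma} and use that $g$ stays open under any coarsening), but the two points you defer as ``technical'' are where the entire proof lives, and as stated the first one cannot work. You want $D_Y$ (the approximating points of each arc) to be closed and discrete in $Y$ so that Lemma~\ref{main_lemma} applies, \emph{and} you want those same points to accumulate at the tip so that $g\circ\phi$ is continuous there; these demands are contradictory. If instead the approximating points do not converge to the tip, you lose both the connectedness of the arc and any mechanism forcing continuity of $g\circ\phi$ at the tip. The paper dissolves this tension rather than resolving it: it takes $Y=\mathsf{D}(Q)\times\mathsf{J}(\kappa)$ -- which is \emph{disconnected} -- and maps each hedgehog bijectively onto a single fiber $g^{-1}(q)$ over a dense set $Q$, so $g\circ\phi$ is locally constant and the continuity of $g$ on $\langle X,\sigma\rangle$ is free. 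Connectedness of $\langle X,\sigma\rangle$ is then obtained not from connectedness of $Y$ but from connectedness of $Z$, openness of $g$, and the fact that any clopen partition of $X$ would have to tear some connected hedgehog fiber. To make room for hedgehogs of spininess $\kappa$ inside every fiber, the paper also replaces the classical preimage by $\mathsf{D}(\kappa)\times T$ with $T$ coming from Pol's theorem; your description of the classical $M$ as a sum of single convergent sequences is not the standard construction and would not even yield an open map (the image of an isolated point would have to be open in the connected space $Z$).

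The second gap is metrizability of $\gamma$. Parts (2)--(3) of Lemma~\ref{main_lemma} transport only $\sigma$-discrete families of closed subsets sitting inside closed discrete subsets of $X$; they say nothing about transporting a base, so the claim that Bing's theorem upgrades submetrizability of $\sigma(\tau_M,\tau_Y,\phi)$ to metrizability is unsupported, and the paper never asserts that $\sigma$ itself is metrizable. What the paper actually proves is that $\langle X,\sigma\rangle$ is submetrizable, collectionwise normal and perfectly normal, with $D_X$ and $X\setminus D_X$ decomposed into $\sigma$-discrete families of closed sets; it then applies Oka's lemma to obtain a strictly coarser metrizable topology $\gamma\subseteq\sigma$ that is required to contain the specific discrete open families $W_i(d)\cap g^{-1}[U^j_{g(d)}]$ and $V_i(F)\cap g^{-1}[U^j_{g[F]}]$. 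Those families are precisely what certify that $g$ remains continuous after this final coarsening; without that step your argument terminates at a submetrizable, not metrizable, connected preimage.
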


\begin{proof}
Let $Z$ be a connected first-countable space and let $Q$ be a dense subset of $Z$. From \cite[Theorem 1]{Pol} it follows that there exist a $\sigma$-discrete metrizable space $T$ and a continuous open map $f\colon T \to Z$. Let $\kappa \coloneq \mathsf{max}\{2^{\aleph_0}, |T|\}$. Consider the space $\mathsf{D}(\kappa)\times T$ and let
$$
    g \coloneq (f \circ p_T) \colon \mathsf{D}(\kappa)\times T \to Z,
$$
where $p_T$ is the projection of $\mathsf{D}(\kappa)\times T$ onto $T$. Note that
\begin{itemize}
    \item $g$ is open and continuous;
    \item $|g^{-1}(z)|=\kappa$ for all $z \in Z$;
    \item $g^{-1}[Q]$ is dense in $\mathsf{D}(\kappa)\times T$.
\end{itemize}

Now let us prove that there exists a closed discrete subset $D$ of $\mathsf{D}(\kappa)\times T$ such that
\begin{itemize}
    \item $D \subseteq g^{-1}[Q]$;
    \item $|g^{-1}(q) \cap D| = \kappa$ for all $q \in Q$;
    \item $|g^{-1}(q) \setminus D| = \kappa$ for all $q \in Q$.
\end{itemize}

Note that $|Q| \leq \kappa$. Let $\{A_q\ \colon\ q \in Q\}$ be a decomposition of $\kappa$ such that $|A_q| = \kappa$ for all $q \in Q$. Now for every $q \in Q$ and $\alpha \in A_q$ fix an element $\mathsf{x}(\alpha, q) \in g^{-1}(q)\cap(\{\alpha\}\times T)$. Let $D \coloneq \{\mathsf{x}(\alpha, q)\ \colon\ q \in Q \text{ and } \alpha \in A_q\}$.

Let $\mathsf{J}(\kappa)$ be the hedgehog of spines $\kappa$ (for details see \cite[b-13 Special Spaces]{enc}). For every $q \in Q$ let $\phi_{q} \colon \mathsf{J}(\kappa) \to g^{-1}(q)$ be a bijection such that
$$
    \phi_{q}[\{1\}\times \kappa] = g^{-1}(q)\setminus D.
$$
Now consider the space $\mathsf{D}(Q) \times \mathsf{J}(\kappa)$ and define $\phi \colon \mathsf{D}(Q) \times \mathsf{J}(\kappa) \to \mathsf{D}(\kappa)\times T$ by the following rule:
$$
    \phi(\langle q, x\rangle) \coloneq \phi_{q}(x).
$$
Note that
\begin{equation} \label{as_in_lem1}
    \phi[\mathsf{D}(Q) \times \mathsf{J}(\kappa)] = g^{-1}[Q] \text{ is dense in } \mathsf{D}(\kappa)\times T,
\end{equation}
and
\begin{equation} \label{as_in_lem2}
    \phi[\mathsf{D}(Q) \times (\{1\} \times \kappa)] = g^{-1}[Q] \setminus D.
\end{equation}
Now let 
$$X \coloneq \mathsf{D}(\kappa)\times T,$$ $$Y \coloneq \mathsf{D}(Q) \times \mathsf{J}(\kappa),$$ $$D_X \coloneq D \text{ and }$$ $$D_Y \coloneq \mathsf{D}(Q) \times (\{1\} \times \kappa).$$
From (\ref{as_in_lem1}) and (\ref{as_in_lem2}) it follows that $X$, $Y$, $D_X$, $D_Y$, and $\phi$ satisfy the premises of Lemma \ref{main_lemma}. Let $\sigma \coloneq \sigma(\tau_X, \tau_Y, \phi)$. Now we prove that
\begin{equation} \label{sigma_connected}
    \langle X, \sigma \rangle \text{ is connected.}
\end{equation}

Assume the converse, let $U_1$ and $U_2$ be nonempty open subsets of $\langle X, \sigma \rangle$ such that $U_1 \cap U_2 = \varnothing$ and $U_1 \cup U_2 = X$. Note that $g[U_1] \cap g[U_2] \neq \varnothing$, since $g\colon X \to Z$ is an open surjection, $\sigma \subseteq \tau_X$, and $Z$ is connected. Take $q \in g[U_1] \cap g[U_2] \cap Q$, then $g^{-1}(q)\cap U_1 \neq \varnothing$ and $g^{-1}(q)\cap U_2 \neq \varnothing$. It follows that $\phi^{-1}[U_1] \cap \phi^{-1}(g^{-1}(q))$ and $\phi^{-1}[U_2] \cap \phi^{-1}(g^{-1}(q))$ are nonempty disjoint open subsets of $\phi^{-1}(g^{-1}(q)) \approx \mathsf{J}(\kappa)$ that cover $\phi^{-1}(g^{-1}(q))$, a contradiction.

Let us prove that there exists a $\sigma$-discrete family of closed sets $\mathcal{D}$ in $\langle X, \sigma \rangle$ such that 
\begin{equation} \label{DX_D}
    D_X = \bigcup \mathcal{D}
\end{equation} 
and
\begin{equation} \label{F_sing}
    \forall F \in \mathcal{D}\ \exists q \in Q [F \subseteq g^{-1}(q)].
\end{equation}
Since $\mathsf{D}(Q) \times \mathsf{J}(\kappa)$ is metrizable, we see that $\{q\} \times [\mathsf{J}(\kappa) \setminus (\{1\} \times \kappa)]$ is an $F_\sigma$-set for all $q \in Q$, so for every $q \in Q$ let $\{F_q(i)\ \colon\ i \in \omega\}$ be a family of closed sets such that $\{q\} \times [\mathsf{J}(\kappa) \setminus (\{1\} \times \kappa)] = \bigcup_{i \in \omega} F_q(i)$. For every $i \in \omega$ let $\mathcal{F}_i \coloneq \{F_q(i)\ \colon\ q \in Q\}$, it is easy to see that $\mathcal{F}_i$ is a discrete family of closed sets in $\mathsf{D}(Q)\times \mathsf{J}(\kappa)$. Note that 
$$
    \forall q \in Q\ \forall i \in \omega \big[\phi[F_q(i)] = \phi_q[F_q(i)] \subseteq g^{-1}(q) \cap D_X\big]; 
$$
$$
    D_X = \bigcup \bigcup_{i \in \omega} \phi[\mathcal{F}_i].
$$
Now let $\mathcal{D} \coloneq \bigcup_{i\in\omega}\phi[\mathcal{F}_i]$. The statement follows from Lemma \ref{main_lemma}.3.

Now we prove that 
\begin{equation} \label{X_minus_DX_sigma_disc}
X \setminus D_X \text{ is } \sigma\text{-discrete in }\langle X, \sigma \rangle.
\end{equation}
Note that $X \setminus D_X$ is $\sigma$-discrete in $\langle X, \tau_X \rangle$ and $\phi^{-1}[X \setminus D_X] = \mathsf{D}(Q) \times (\{1\}\times \kappa)$ is a closed discrete in $Y$. Now the statement follows from Lemma \ref{main_lemma}.4.

From Lemma \ref{main_lemma}.5 and Lemma \ref{main_lemma}.6 it follows that
\begin{equation} \label{sigma_colnorm_perfnorm}
    \langle X, \sigma \rangle \text{ is collectionwise normal and perfectly normal.} 
\end{equation}

Also from Lemma \ref{main_lemma}.7 it follows that
\begin{equation} \label{sigma_submetr}
    \langle X, \sigma \rangle \text{ is submetrizable.} 
\end{equation}

Now let us prove that
$$
    \forall U \in \tau_Z\big[g^{-1}[U]\in\sigma\big].
$$

Let $U \in \tau_Z$, then $\phi^{-1}[g^{-1}[U]] = \phi^{-1}[g^{-1}[U] \cap g^{-1}[Q]] = \phi^{-1}[g^{-1}[U\cap Q]] = \bigcup\{ \{q\}\times \mathsf{J}(\kappa)\ \colon\ q \in U \cap Q\} \in \tau_Y$.

Now we prove that there exists a metrizable topology $\gamma$ on $X$ such that $\gamma \subseteq \sigma$ and 
\begin{equation*}
    \forall U \in \tau_Z \big[g^{-1}[U]\in \gamma\big]. 
\end{equation*}

For every $z \in Z$ let $\mathcal{U}_z = \{U_z^j\ \colon\ j \in \omega\}$ be a countable local base at $z$. From (\ref{X_minus_DX_sigma_disc}) it follows that 
\begin{equation} \label{X_minus_DX_Di}
    X \setminus D_X = \bigcup_{i\in\omega} D_i,
\end{equation}
where $D_i$ is a closed discrete set in $\langle X, \sigma \rangle$ for all $i \in \omega$. From (\ref{sigma_colnorm_perfnorm}) it follows that for every $i \in \omega$ there exists a discrete family of open sets $\mathcal{W}_i = \{W_i(d)\ \colon\ d\in D_i\}$ in $\langle X, \sigma \rangle$ such that $d \in W_i(d)$ for all $d \in D_i$.

For every $i, j \in \omega$ let
$$
    \mathcal{W}_i^j \coloneq \{W_i(d) \cap g^{-1}[U_{g(d)}^j]\ \colon\ d \in D_i\}.
$$

From (\ref{DX_D}) it follows that 
$$\mathcal{D} = \bigcup_{i \in \omega}\mathcal{D}_i,$$
where $\mathcal{D}_i$ is a discrete family of closed sets in $\langle X, \sigma \rangle$ for all $i \in \omega$. From (\ref{sigma_colnorm_perfnorm}) it follows that for every $i \in \omega$ there exists a discrete family of open sets $\mathcal{V}_i = \{V_i(F)\ \colon\ F \in \mathcal{D}_i\}$ in $\langle X, \sigma \rangle$ such that $F \subseteq V_i(F)$ for all $F \in \mathcal{D}_i$. Also note that from (\ref{F_sing}) it follows that $g[F]$ is a singleton for all $F \in \mathcal{D}$. Now for every $i, j \in \omega$ let 
$$
    \mathcal{V}_i^j \coloneq \{V_i(F) \cap g^{-1}[U_{g[F]}^j]\ \colon\ F \in \mathcal{D}_i\}.
$$

From (\ref{sigma_submetr}) and \cite[Lemma 3.1]{oka} it follows that there exists a metrizable topology $\gamma$ on $X$ such that $\gamma \subseteq \sigma$, $\mathcal{W}_i^j \subseteq \gamma$, and $\mathcal{V}_i^j \subseteq \gamma$ for all $i, j \in \omega$. Let us prove that
$$
    g\colon \langle X, \gamma \rangle \to Z \text{ is continuous.}
$$

Let $U \in \tau_Z$ and $x \in g^{-1}[U]$. Now we consider two cases.

1. $x \in X \setminus D_X$. From (\ref{X_minus_DX_Di}) it follows that there exists $i \in \omega$ such that $x \in D_i$. Let $j \in \omega$ be such that $U_{g(x)}^j \subseteq U$, then $W_i(x) \cap g^{-1}[U_{g(x)}^j] \subseteq g^{-1}[U]$ and $W_i(x) \cap g^{-1}[U_{g(x)}^j] \in \mathcal{W}_i^j \subseteq \gamma$.

(2) $x \in D_X$. Then there exists $i \in \omega$ and $F \in \mathcal{D}_i$ such that $x \in F$. Let $j \in \omega$ be such that $U_{g[F]}^j \subseteq U$, then $V_i(F) \cap g^{-1}[U_{g[F]}^j] \subseteq g^{-1}[U]$ and $V_i(x) \cap g^{-1}[U_{g[F]}^j] \in \mathcal{V}_i^j \subseteq \gamma$.

Now from (\ref{sigma_connected}) it follows that $\langle X, \gamma \rangle$ is connected, and since $g\colon X \to Z$ is open it follows that $g\colon \langle X, \gamma \rangle \to Z$ is open too.
\end{proof}

\begin{corr}
    A $T_1$ topological space is connected and first-countable if and only if it is a continuous open image of a connected metrizable space. 
\end{corr}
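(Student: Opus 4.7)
My plan is to handle the two directions of the biconditional separately, drawing almost entirely on Theorem \ref{main_theorem} and on standard facts about continuous open surjections.

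The ``only if'' direction is immediate: given a $T_1$ connected first-countable space $Z$, Theorem \ref{main_theorem} itself furnishes a continuous open surjection from a connected metrizable space onto $Z$. So I would dispatch this direction with a one-line reference.

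For the ``if'' direction, I would assume $Z$ is $T_1$ and let $f \colon M \to Z$ be a continuous open surjection from some connected metrizable space $M$. Connectedness of $Z$ is automatic, because the continuous image of a connected space is connected. For first-countability, I fix $z \in Z$, pick any $m \in f^{-1}(z)$, and take a countable local base $\{V_n\}_{n \in \omega}$ at $m$ (which exists since $M$ is metrizable). I then verify that $\{f[V_n]\}_{n \in \omega}$ is a countable local base at $z$: each $f[V_n]$ is open in $Z$ because $f$ is open, and contains $z$ because $m \in V_n$; conversely, for any $U \in \tau_Z$ with $z \in U$, the set $f^{-1}[U]$ is an open neighbourhood of $m$ in $M$, so some $V_n \subseteq f^{-1}[U]$, whence $f[V_n] \subseteq U$.

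There is no substantive obstacle here; the corollary is essentially a repackaging of the main theorem together with the classical fact that open continuous surjections preserve first-countability. The $T_1$ hypothesis is used only in the ``only if'' direction, where it is required in order to apply Theorem \ref{main_theorem}.
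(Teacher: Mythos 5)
Your proof is correct and is exactly the argument the paper intends (the corollary is stated without proof there): the forward direction is Theorem \ref{main_theorem}, and the converse is the standard fact that continuous open surjections preserve connectedness and first-countability. Nothing to add.
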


\begin{ques}
    Let $X$ be a connected Lashnev space. Is it true that $X$ is a continuous closed image of a metrizable connected space?
\end{ques}

{\bf Acknowledgement} The work was performed as part of research conducted in the Ural Mathematical Center with the financial support of the Ministry of Science and Higher Education of the Russian Federation (Agreement number 075-02-2024-1377).

\bigskip

\end{document}